\newtheorem{thm}{Theorem}
\newtheorem{prop}{Proposition}
\newtheorem{lemma}{Lemma}
\newtheorem{cor}{Corollary}
\title{Some inequalities for central moments of matrices}
\author{Zoltán Léka}
\address{Budapest Business School \\ 1149 Budapest \\ Buzog\'any u. 11--13.}
\email{leka.zoltan@renyi.mta.hu}
\thanks{This study was partially supported by Hungarian NSRF (OTKA) grant
no. K104206 }
\subjclass[2010]{Primary 47A63, 47C15, 46L53,  ; Secondary 15A60, 60B99}
\keywords{variance, standard deviation, central moments, conditional expectation, density, extreme point, spread}
\date{}
\begin{document}

\begin{abstract}
   In this paper we shall study noncommutative central moment inequalities with a main focus on whether the commutative 
   bounds are tight in the noncommutative case, or not. We prove that the answer is affirmative for the fourth central moment
   and several particular results are given in the general case. As an application, we shall present some lower estimates of the spread of Hermitian and
   normal matrices as well.
\end{abstract}


\maketitle

\section{Introduction}
    Let $X$ be a random variable on a probability space $(\Omega, P)$. Then its $p^{\rm th}$
    (fractional) central moments are defined by the formula
     $$ \mu_p(X)= \int_\Omega \left|X - \int_\Omega X \: dP \right|^p \: dP.$$
    The most studied noncommutative analogue of these quantities is the noncommutative variance or quantum variance. Let $M_n(\mathbb{C})$ be
    the algebra of $n \times n$ complex matrices. Whenever
    $\Phi \colon M_n(\mathbb{C}) \rightarrow  M_m(\mathbb{C})$ is a positive  unital linear map,
    the variance of a matrix $A$ can be defined as $\Phi(A^*A) - \Phi(A)^*\Phi(A).$ 
    For several interesting properties of this variance, we refer the reader to Bhatia's book \cite{Bh}. 
    For instance, special choices of $\Phi$ and applications of variance estimates provided simple new proofs of spread estimates of normal
    and Hermitian matrices as well, see \cite{BSh} and \cite{BSh2}. On the other hand, the first sharp estimate of the noncommutative variance appeared
    in K. Audenaert's paper \cite{A1} in connection with the Böttcher--Wenzel commutator estimate. For several different proofs of his result, we refer to
    \cite{BG}, \cite{BSh2} and \cite{R2}. Recently, extremal properties of the quantum variance were studied in \cite{PT}.
    
    It is simple to see that if $\omega$ is a state (i.e. positive linear functional of norm $1$) of the algebra 
    $M_n(\mathbb{C}),$ then one has the upper bound
     $$ \omega(|A - \omega(A)|^2) = \omega(|A|^2) - |\omega(A)|^2 \leq \|A\|^2$$ (see \cite[Theorem 3.1]{BSh} for positive linear maps).
    A careful look of the previous inequality says that the noncommutative variance cannot be larger than the ordinary variance 
    of random variables.
    In fact, if $X$ is a Bernoulli variable, that is, $P(X = 0) = p$ and $P(X = 1) = 1-p$  $(0\leq p \leq 1),$ then 
                $ \mu_2(X) \leq 1/4$ holds. Furthermore, for any (complex-valued) random variable $Z \colon \Omega \rightarrow \mathbb{C}$ the inequality 
    \begin{eqnarray*}
       \sqrt{\mu_2(Z)} &\leq& 2 \max \{ \sqrt{ \mu_2(X)} \colon X \mbox{ Bernoulli random variable } \}  \|Z\|_\infty \\ 
                       &=&  \|Z\|_\infty 
    \end{eqnarray*}
    readily follows, see \cite[Theorem 7]{A1} in the discrete case and \cite[Theorem 2]{L2} in the general case, for instance. 
     Furthermore, one can have the following upper bound for the  $n^{\rm th}$ ($n \in \mathbb{N}$) central moment 
    of a normal element $A$ in matrix and $C^*$-algebras 
     $$ \sqrt[n]{\omega(|A-\omega(A)|^n)} 
     \leq 2 \max \{ \sqrt[n]\mu_n(X) \colon X \mbox{ Bernoulli random variable } \} \min_{\lambda \in \mathbb{C}} \|A-\lambda\|,$$
    see \cite[Theorem 2]{L2}. 
   
    Our main motivation is to provide sharp upper bounds on the non-commutative central moments of arbitrary matrices and to decide
    whether the noncommutative dispersion can be larger than that of the commutative one. 
    Now we are able to tackle the problem only for $ 1 \leq p \leq 2,$ $p=4$ 
    and for the tracial state if $1 \leq p < \infty$. For some recent results on the complementary
    upper bounds on fourth central moments of matrices,
    the reader might see the paper \cite{K}.  
    We note that a very similar phenomena was observed by K. Audenaert \cite{A2}. He proved that the asymmetry
    of the quantum relative entropy essentially cannot be larger 
    than that of two Bernoulli distributions.    
    
    The paper is organized as follows. In the next section we prove an inequality for the fourth moments of  partial isometries and positive
    linear maps given by unit vectors. To set free these assumptions, we shall apply a dilation method and
    a rank-estimate on the extreme points of convex sets of density matrices. In the last section of the paper, we shall produce some general results on $p^{\rm th}$ moments of matrices, determined by the tracial state. 
    As an application, we shall present several lower bounds for the spread of normal and Hermitian matrices.
 
 \section{General moment inequalities} 
 
\subsection{A moment estimate of partial isometries} 
 Let $X$ be a Bernoulli 
 random variable with parameter $p.$
 Then one has clearly the inequality $\mu_4(X)  \leq {1 \over 12},$ while $\mu_4(Z)  \leq {4 \over 3} \min_{\lambda \in \mathbb{C}}\max_i |z_i - \lambda|^4$ comes true for any finite--valued 
 random variable $Z$. From a geometric point of view, the  quantity $\min_{\lambda \in \mathbb{C}}\max_i |z_i - \lambda|$ is the radius of the smallest enclosing circle of the values of $Z.$  
  For several inequalities in connection with it and vector norms, the reader might see \cite[Section 4]{A1}.
  
  Our first result 
 gives the corresponding noncommutative moment estimate for partial isometries. 
 Recall that an $n \times n$ matrix $V$ is partial isometry if $V$ is an isometry on the orthogonal complement of its kernel.
 A useful characterization says that $V$ is a partial isometry if and only if $V^*V$ is an orthogonal projection (to the subspace $(\mbox{ker } V)^\perp$), or, which is the 
 same, $V = VV^*V$ (see \cite{H}, \cite[page 95]{Pe}). Hence $V^*$ is a partial isometry and $VV^*$ is an orthogonal projection as well.  
 
 We start with a technical lemma. 
 
 \begin{lemma}
    Let  $x_1,x_2,x_3,x_4 \in \mathbb{R}.$  Then 
      $$ \max_x \: ( 2x_1^2x_3-2x_1x_4 +x_2^2 )= 1$$
      subject to the constraints
           $0 \leq x_3 \leq x_2 \leq 1$ and 
           $x_1 \leq x_4 +  \sqrt{(1-x_2^2)(x_2^2 - x_3^2)}.$
 \end{lemma}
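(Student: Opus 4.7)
The objective $2x_1^2x_3 - 2x_1 x_4 + x_2^2$ depends on $x_4$ only through the linear term $-2x_1x_4$, while the single constraint involving $x_4$ is the one-sided inequality $x_4 \ge x_1 - c$ with $c := \sqrt{(1-x_2^2)(x_2^2-x_3^2)}$. My plan is to eliminate $x_4$ first, then $x_1$, ending with a two-variable algebraic inequality in $x_2, x_3$ that can be resolved by a short factorisation.

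For $x_1 \ge 0$ (the range in which the maximisation is well-posed; see the final paragraph), the supremum over $x_4$ is attained at $x_4 = x_1 - c$, so the problem reduces to maximising
$$-2(1-x_3)x_1^2 + 2cx_1 + x_2^2.$$
Since $x_3 \le x_2 \le 1$, this is a concave quadratic in $x_1$, maximised at $x_1^{\ast} = c/(2(1-x_3))$, with value
$$\frac{c^2}{2(1-x_3)} + x_2^2 \;=\; \frac{(1-x_2^2)(x_2^2-x_3^2)}{2(1-x_3)} + x_2^2.$$

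The crux is then to verify
$$\frac{(1-x_2^2)(x_2^2-x_3^2)}{2(1-x_3)} + x_2^2 \le 1 \quad \text{for } 0 \le x_3 \le x_2 \le 1.$$
Factorising $x_2^2 - x_3^2 = (x_2-x_3)(x_2+x_3)$ and using $x_2 \le 1$ together with $x_3 \ge 0$ to bound this by $(1-x_3)(1+x_3) = 1-x_3^2$, the $1-x_3$ in the denominator cancels and the left-hand side is bounded by $(1-x_2^2)(1+x_3)/2 + x_2^2$, which is at most $(1-x_2^2) + x_2^2 = 1$ because $1+x_3 \le 2$. Equality is achieved, for instance, at $x_2 = 1$ with $x_1 = 0$ (which makes the constraint on $x_4$ automatic), so the maximum equals $1$.

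The one subtle point I anticipate is the admissible range of $x_1$: because only a lower bound on $x_4$ is imposed, unrestricted $x_1 < 0$ would make the objective unbounded, so the lemma implicitly restricts to $x_1 \ge 0$ (compatible with the partial-isometry setup in which it will be applied). Apart from that, the argument is a clean two-step elimination followed by the elementary factor-and-cancel bound above.
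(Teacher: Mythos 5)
Your proof is correct and takes a genuinely different, more elementary route than the paper. The paper substitutes $y_2=\sqrt{1-x_2^2}$, $y_3=\sqrt{x_2^2-x_3^2}$, pushes the problem onto the active constraint $y_1=y_4+y_2y_3$, and then runs a constrained Lagrange-multiplier analysis: solving the gradient system, checking the Hessian of the Lagrangian on the tangent space of the constraints, and arguing globality via $y_4$-sections of the cylinder $\mathbb{D}\times\mathbb{R}$. Your two-step elimination (optimal $x_4=x_1-c$ with $c=\sqrt{(1-x_2^2)(x_2^2-x_3^2)}$, then the vertex of the concave quadratic in $x_1$) reduces everything to the single inequality $\frac{(1-x_2^2)(x_2^2-x_3^2)}{2(1-x_3)}+x_2^2\le 1$, settled by the factor-and-cancel bound $x_2^2-x_3^2\le(1-x_3)(1+x_3)$; this avoids second-order conditions entirely, makes attainment of the value $1$ explicit, and is arguably more airtight than the paper's sections argument. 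Your remark about $x_1\ge 0$ is well taken: as literally stated the lemma is false (take $x_1<0$, $x_4\to+\infty$), and the paper's own step ``convex in $y_1$, hence the maximum is at the largest $y_1$'' silently needs the same restriction, which is harmless because the lemma is applied with $x_1=\alpha\ge 0$. The only point worth adding is the degenerate case $x_3=1$ (excluded by your division by $1-x_3$): then $x_2=1$, $c=0$, and the reduced objective is identically $x_2^2=1$, so the bound holds trivially.
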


 \begin{proof} With a change of variables $y_2 = \sqrt{1-x_2^2,}$ $y_3 = \sqrt{x_2^2-x_3^2}$ and $y_1 = x_1,y_4 = x_4$, we have
      $ 2x_1^2x_3-2x_1x_4 +x_2^2 - 1 = 2y_1^2\sqrt{1-y_2^2-y_3^2}- 2y_1y_4-y_2^2.$ Notice that the last function is convex in $y_1$, hence it attains its maximum
      when $y_1$ is the largest, i.e. $y_1 = y_4 + y_2y_3.$ Therefore, it is enough to prove the general statement 
      \begin{eqnarray*}
      \max \: G(y_1,y_2,y_3,y_4) &=& \max \: 2y_1^2\sqrt{1-y_2^2-y_3^2}- 2y_1y_4-y_2^2 = 0 \\ && \mbox{s.t. } y_1 = y_4 + y_2y_3.
      \end{eqnarray*}
      
     First, we compute the extrema of $G$ when $(y_2,y_3,y_4)$ is in the open cylinder $\mathbb{D} \times \mathbb{R},$ where $\mathbb{D}$ denotes
     the open unit disk of the plane. To do this, let us find the  constrained critical points of the Lagrangian
     $\mathcal{L}(y, \lambda) = G(y) - \lambda c(y),$ where the constraint function is $c(y) = y_1 - y_4 - y_2y_3.$ 
     A little calculation gives for the gradient equation $\nabla \mathcal{L}(y,\lambda) = 0$ that
     \begin{eqnarray*}
      \begin{split}
       -\lambda - 2y_4 + 4y_1\sqrt{1-y_2^2 - y_3^2} &= 0 \\
        \lambda y_3 - {2y_1^2y_2 \over \sqrt{1-y_2^2-y_3^2}} - 2y_2 &= 0 \\
       \lambda y_2 - {2y_1^2y_3 \over \sqrt{1-y_2^2-y_3^2}} &= 0\\        
         \lambda-2y_1 &= 0 \\ 
        -y_1+y_2y_3+ y_4 &= 0. 
      \end{split}
     \end{eqnarray*}
     
     To solve this system, note that if $y_1 = \lambda /2 = 0$ then $y_2 = y_4 = 0$ and $-1 < y_3 < 1.$
     From $\lambda = y_4 = 0,$ it is simple to check that $G \leq 0.$ Indeed, $y_1 = y_2y_3$ and
     $$ 2y_2^2y_3^2 \sqrt{1-y_2^2-y_3^2} -  y_2^2 \leq 0, $$
     because of the inequalities
     \begin{eqnarray*}
        \begin{split}
            2y_3^2 \sqrt{1-y_2^2-y_3^2} \leq 2y_3^2 \sqrt{1-y_3^2} \leq  2 \sqrt{y_3^2(1-y_3^2)} \leq 1.   \\   
        \end{split}
      \end{eqnarray*}
    
    On the other hand, if $\lambda \neq 0,$ from the third and fourth equation $$  2y_2  \sqrt{1-y_2^2-y_3^2} =  \lambda y_3.$$
     Substitute this to the second one and we obtain that
      $$  y_2  (1-y_2^2-y_3^2) = y_1^2y_2  + y_2\sqrt{1-y_2^2-y_3^2}.  $$
     Since $ 1-y_2^2-y_3^2 < y_1^2  + \sqrt{1-y_2^2-y_3^2},$ if $(y_2,y_3) \in \mathbb{D}$ and $y_1 \neq 0,$ it follows that $y_2 = 0.$ Clearly, 
     $y_2 = y_3 = 0$ and $y_1 = y_4 = \lambda /2$ hold. The corresponding Hessian of $\mathcal{L}$ at a stationary point $(y_*, \lambda_*) = (t,0,0,t,2t)$
     is  
     $$ \nabla_{yy} \mathcal{L}(y_*,\lambda_*) = \begin{bmatrix} 4 & 0 & 0 & -2 \\ 0 &-2(t^2+1)  & 2t & 0 \\ 0& 2t & -2t^2 &0 \\ -2&0&0&0   \end{bmatrix}. $$
     Now let us consider an $y_{4*}$-sections of the cylinder  $\mathbb{D} \times \mathbb{R};$ that is, add the constraint $y_4 = t \: (\neq 0)$ to the optimization problem.
     Then $c_2(y) = y_4 - t$ and $\nabla c_2(y_*) = [0,0,0,1]^*.$  
      Note that $\nabla c(y_*) = [1,0,0,-1]^*,$ hence the tangent plane of the constraints at $y_*$ is $$ \mathfrak{T}(\lambda_*) := \{ w \colon w^* \nabla c(y_*) = 0 \mbox{ and } w^* \nabla c_2(y_*) = 0  \} 
     = \{ [0 , w_2 , w_3 , 0]^* \colon w_i \in \mathbb{R} \}.$$
     Furthermore, we obtain  
       $$ w^*\nabla_{yy} \mathcal{L}(y_*,\lambda_*)w = -2t^2w_2^2 - 2(w_2 - tw_3)^2 < 0, \qquad   0 \neq w \in \mathfrak{T}(\lambda_*).$$
   Thus $y_*$ is a strict local maximum of $G$ subject to $c$ and $c_2$, see \cite[Theorem 12.6]{Nu},
     and $G(y_*) = 0.$
      
      For $y_4 \neq 0,$ all $y_4$-sections of the cylinder  $\mathbb{D} \times \mathbb{R}$ contain exactly one local maximum point, hence $0$ is the global
      maximum of $G$ on its domain (s.t. the constraint).
    
  %
  %
  \end{proof} 
 
\begin{thm} Let $V$ be a partial isometry in $M_n(\mathbb{C}).$ Let $Q \in M_n(\mathbb{C})$ be a rank-one orthogonal 
projection. Then
  $$ {\rm Tr }  \: [Q|V- {\rm Tr } \: [QV]|^4] \leq  {4 \over 3}.$$
  Moreover, if the equality holds then $|V|Q = Q|V|.$ 
 \end{thm}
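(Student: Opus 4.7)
My approach is to reduce the trace inequality to the algebraic estimate supplied by the preceding lemma. First I would normalize the phase: replacing $V$ with $e^{-i\arg\lambda}V$ (still a partial isometry with the same modulus) lets me assume $\lambda := \mathrm{Tr}(QV) = \langle e, Ve\rangle \geq 0$. Let $P = V^*V$ (a projection, so $|V| = P$) and $\sigma := \|Ve\|^2 = \langle e, Pe\rangle$; Cauchy--Schwarz gives $0 \leq \lambda \leq \sqrt\sigma \leq 1$. With $u := (V-\lambda)e$ (so $u \perp e$, $\|u\|^2 = \sigma - \lambda^2$), the self-adjointness of $|V-\lambda|^2$ converts the fourth moment into a norm square:
\[
  \mathrm{Tr}[Q|V-\lambda|^4] = \|(V-\lambda)^*(V-\lambda)e\|^2 = \|(V^*-\lambda)u\|^2 = \|V^*u\|^2 + \lambda^2\|u\|^2 - 2\lambda\,\mathrm{Re}\langle u, Vu\rangle.
\]

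I would then match $\tfrac{3}{4}$ of this expression to the lemma's template $2x_1^2 x_3 - 2 x_1 x_4 + x_2^2$. The natural choices $x_2 = \|Ve\|$ and $x_3 = \lambda$ make $0 \leq x_3 \leq x_2 \leq 1$ automatic and convert $\sqrt{(1-x_2^2)(x_2^2-x_3^2)}$ into $\|(I-P)e\|\,\|u\|$. The parameters $x_1$ and $x_4$ must be chosen so that: (a) the lemma's linear constraint $x_1 \leq x_4 + \|(I-P)e\|\,\|u\|$ reduces to a Cauchy--Schwarz inequality between $(I-P)e$ and a vector of norm $\|u\|$ extracted from $V^*u$; and (b) the dominance $\tfrac{3}{4}\mathrm{Tr}[Q|V-\lambda|^4] \leq 2x_1^2 x_3 - 2 x_1 x_4 + x_2^2$ holds by direct expansion. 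Once both are established, the lemma bounds the right-hand side by $1$, yielding $\mathrm{Tr}[Q|V-\lambda|^4] \leq 4/3$.

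\textbf{Equality.} In the equality case every inequality in the chain is tight; in particular the lemma expression attains its maximum value $1$. Inspection of the lemma's proof shows this forces $x_2 = 1$, i.e.\ $\sigma = \|Ve\|^2 = 1$, which in turn forces $Pe = e$ (a projection maps a unit vector to a unit vector only if it fixes it). Hence $Pe \in \mathbb{C}e$, so the projection $P = |V|$ commutes with the rank-one $Q = |e\rangle\langle e|$, giving $|V|\,Q = PQ = Q = QP = Q\,|V|$.

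\textbf{Main obstacle.} The creative step is the explicit choice of $x_1$ and $x_4$. A naive Cauchy--Schwarz $|\langle u, Vu\rangle| \leq \|V^*u\|\,\|u\|$ only yields $\mathrm{Tr}[Q|V-\lambda|^4] \leq (\|V^*u\| + \lambda\|u\|)^2$, whose supremum over the data already attains $27/16$, well above the target $4/3$. The right identification must exploit the partial-isometry identities $VP = V$ and $PV^* = V^*$ (equivalently $V(I-P) = 0 = (I-P)V^*$) together with the orthogonal decomposition $V^*u = (\sigma - \lambda^2)e + (w - \lambda u')$, where $w := Pe - \sigma e$ and $u' := V^*e - \lambda e$ are both orthogonal to $e$. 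This decomposition splits the cross term $-2\lambda\,\mathrm{Re}\langle u, Vu\rangle$ between the contribution absorbed by the lemma's linear constraint and the contribution feeding directly into $x_2^2$; making this split work out exactly is the heart of the proof.
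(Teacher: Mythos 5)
Your write-up is a plan rather than a proof: the entire content of the theorem is concentrated in the step you leave open, namely the explicit choice of $x_1,x_4$ and the verification of your conditions (a) and (b), and you say yourself that "making this split work out exactly is the heart of the proof." Until that matching is exhibited, nothing beyond the (easy) reformulation $\mathrm{Tr}[Q|V-\lambda|^4]=\|(V^*-\lambda)u\|^2$ has been established. Moreover, there is concrete reason to doubt that your particular template (uniform factor $\tfrac34$, $x_2=\|Ve\|$, $x_3=\lambda$) can be completed "by direct expansion." Expanding the fourth moment with the identities $VV^*V=V$, $V^*VV^*=V^*$ produces terms such as $-3\lambda^4$ and $\lambda^2(3\|V^*Ve\|^2+\|VV^*e\|^2-2)$, which have no evident home in $2x_1^2\lambda-2x_1x_4+\|Ve\|^2$ under the constraint. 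Note also that the lemma's conclusion is only valid for $x_1\geq 0$ (for $x_1<0$ and $x_3>0$ the expression is unbounded under the stated constraints; in the intended application $x_1=\alpha\geq 0$), so any admissible matching must keep $x_1\geq 0$. But then at the extremal configuration $\|Ve\|=1$, $\lambda=1/\sqrt3$ the constraint degenerates to $x_1\leq x_4$, which forces the lemma expression to be $\leq 1$ with equality only at $x_1=0$; your inequality (b) then collapses to $\tfrac34\,\mathrm{Tr}[Q|V-\lambda|^4]\leq\|Ve\|^2$, i.e.\ essentially the theorem itself, so near the equality case the lemma would be doing no work in your scheme and the burden falls entirely on the unproved step.

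For comparison, the paper obtains $\tfrac43$ differently: it is not a $\tfrac34$-rescaling of the lemma. Writing $V=UP$ (polar decomposition, $P=V^*V$), expanding $\mathrm{Tr}[Q|V-\alpha|^4]$ via the partial-isometry identities, using $\|V^*Ve\|^2+\|VV^*e\|^2\leq 2$ and $|\mathrm{Re}\langle Ve,V^*e\rangle|\leq\|PUPe\|$, one bounds the moment by $\bigl(1+2\alpha^2-3\alpha^4\bigr)+\bigl(\|Pe\|^2-1-2\alpha\,\mathrm{Re}\langle UPe,Pe\rangle+2\alpha^2\|PUPe\|\bigr)$. The first bracket is a pure scalar maximization with value $\tfrac43$; the second is shown to be $\leq 0$ by Lemma 1 applied with $x_1=\alpha$, $x_2=\|Pe\|$, $x_3=\|PUPe\|$ (a norm, not $\lambda$), $x_4=\mathrm{Re}\langle UPe,Pe\rangle$, the lemma's constraint coming from $\alpha=\mathrm{Re}\langle UPe,e\rangle\leq\mathrm{Re}\langle UPe,Pe\rangle+(\|Pe\|^2-\|PUPe\|^2)^{1/2}(1-\|Pe\|^2)^{1/2}$. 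Your equality discussion (equality in the lemma forces $x_2=1$, hence $Pe=e$, hence $|V|Q=Q|V|$) does parallel the paper's, but it is contingent on the missing main step.
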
 
 
 \begin{proof} 
   Without loss of generality, we can assume that $\alpha := \mbox{Tr } QV =  \mbox{Tr } QV^* \geq 0.$
    Let $V = UP$ be a polar decomposition of $V,$ where $P = V^*V$ is an orthogonal projection and $U$ is unitary. 
    Choose a unit vector $x$ such that $Q = x^*x.$ First, one has that
   \begin{eqnarray*}
     \begin{split}
      \mbox{Tr } [Q|V- \alpha I|^4] &= \mbox{Tr } [Q( V^*V - \alpha V^*V^2 - \alpha V^*VV^* + \alpha^2 V^*V \\
                    & - \alpha VV^*V + \alpha^2 V^2 + \alpha^2 VV^* - \alpha^3V \\
                    & - \alpha {V^*}^2V + \alpha^2 V^*V + \alpha^2 {V^*}^2 - \alpha^3 V^* \\
                    &   + \alpha^2 V^*V - \alpha^3 V - \alpha^3 V^* + \alpha^4 I)] \\ 
                    & \hspace{-2.6cm} \mbox{and applying the identities } V^*VV^* = V^* \mbox{ and  } VV^*V = V, \\ 
                   &=  \|Px\|^2 - 2 \alpha  \mbox{Re} \langle V^*V^2x,x \rangle +  \alpha^2(3\|V^*Vx\|^2+\|VV^*x\|^2-2) \\ 
                                 &\quad + 2\alpha^2\mbox{Re}\langle V^2x,x \rangle-  3 \alpha^4 \\  
                & \hspace{-2.6cm} \mbox{and since } \|V^*Vx\|^2+\|VV^*x\|^2 \leq 2, \\                   
                   &\leq  \|Px\|^2 - 2 \alpha \mbox{Re} \langle Vx,Px \rangle + 2\alpha^2\|Px\|^2  + 2\alpha^2|\mbox{Re}\langle V^2x,x \rangle|-  3 \alpha^4.          
     \end{split}
   \end{eqnarray*}
    
    Next, from the  Cauchy--Schwarz inequality $$ |\mbox{Re} \langle Vx,V^*x \rangle| = |\mbox{Re } \langle PUPx,PU^*x \rangle| 
     \leq \|PUPx\|\|PU^*x\| \leq \|PUPx\|.$$
       Therefore, we obtain    
      \begin{eqnarray*}
     \begin{split}
       \mbox{Tr } [Q|V- \alpha I|^4]  &\leq  \|Px\|^2  + 2 \alpha^2\|Px\|^2-  3\alpha^4 - 2\alpha {\mbox{Re} \: \langle UPx,  Px \rangle} + 2\alpha^2 \|PUPx\| \\
                                   &\leq  \max_{0 \leq \alpha \leq 1} 1+ 2 \alpha^2-  3\alpha^4  \\
                                   &\quad + \max ( \|Px\|^2 -1 - 2\alpha {\mbox{Re} \: \langle UPx,  Px \rangle} + 2\alpha^2 \|PUPx\|).
     \end{split}
   \end{eqnarray*}
       It is simple to check that $$ \max_{0 \leq \alpha \leq 1} 1+ 2 \alpha^2-  3\alpha^4 = {4 \over 3}.$$ 
       For the remaining part of the previous inequality, from the  Cauchy--Schwarz inequality we have the following constraint for $0 \leq \alpha$ 
         \begin{eqnarray*}
     \begin{split}
               \alpha        &=  \mbox{Re } \langle  UPx,x \rangle  \\ 
                             &=  \mbox{Re } \langle  PUPx,Px \rangle + \mbox{Re } \langle  P^\perp UPx,P^\perp x \rangle \\
                             &\leq \mbox{Re } \langle UPx, Px \rangle + \|P^\perp UPx\|\|P^\perp x\| \\
                             &= \mbox{Re } \langle UPx, Px \rangle + ( \|UPx\|^2-\|PUPx\|^2)^{1/2}(1-\|Px\|^2)^{1/2} \\                       
                             &= \mbox{Re } \langle UPx, Px \rangle + ( \|Px\|^2-\|PUPx\|^2)^{1/2}(1-\|Px\|^2)^{1/2}.                           
     \end{split}                        
   \end{eqnarray*}      
     Hence we can apply Lemma 1 to obtain that  
        $$  2\alpha^2 \|PUPx\| - 2\alpha {\mbox{Re} \: \langle UPx,Px \rangle} + \|Px\|^2  \leq 1.$$
      Thus the inequality
      $$\mbox{Tr } [Q|V- \alpha I|^4] \leq {4 \over 3} $$ follows.
      
      Note that when the equality occurs $\|Px\|^2 = 1$ must hold. This means that
      $Px = x,$ which is the same as $x \in \mbox{ran } P,$ or, $QV^*V = VV^*Q.$ 
   \end{proof}

  \medskip
  
  \noindent {\bf Example 1.} We give a matrix example, which is not normal, to see that the previous inequality is tight. Set the partial isometry 
   $$ V = {1 \over \sqrt{3} }\begin{bmatrix} 1 & 1 & 1 & 0 \cr 1 & -1/2 & -1/2 & 1 \cr 1 & -1/2 & -1/2 &  \hspace{-.2cm} -1  \cr 0 & 0 & 0 &1    \end{bmatrix}$$
   and define
   $$ Q = \begin{bmatrix} 1 & 0 & 0 & 0 \cr 0 & 0 & 0 & 0  \cr 0 & 0 & 0 & 0 \cr 0 & 0 & 0 & 0  \end{bmatrix}. $$
   In fact, $V$ is a partial isometry because $V^*V$ is an orthogonal projection. Moreover, the spectrum of $V$ is $\sigma(V) = \{1,1/\sqrt{3},0,-1 \}$, hence $\min_{\lambda \in \mathbb{C}} \|V-\lambda I_n \| = 1.$  
   Furthermore, it is simple to see that  
    $$  {\rm Tr } \: [Q|V- {\rm Tr } \: [QV]|^4] =  {4 \over 3}.  $$
 
 \subsection{Convex sets of density matrices.} Let $A_1, A_2, \hdots, A_k \in M_n(\mathbb{C})$ be Hermitian matrices and let $\alpha_1, \alpha_2, 
 \hdots, \alpha_k$ be real numbers. Let us consider the  convex, compact set 
 $$ \mathcal{D}(A_1, A_2, \hdots, A_k) := \left\{ X \geq 0 \colon \mbox{ Tr } X = 1 \mbox{ and  Tr } [XA_i] = \alpha_i, \: i = 1, 2, \hdots, k \right\}. $$
  Note that $\mathcal{D}(A_1, A_2, \hdots, A_k) = \mathcal{D}(A_1-\alpha_1 I, A_2-\alpha_2 I, \hdots, A_k-\alpha_k I). $ 
 The geometry of $\mathcal{D}(A_1, A_2, \hdots, A_k)$ is strongly related to that of the elliptope; 
 i.e. the set of real $n \times n$ symmetric positive semidefinite matrices with an all-one diagonal (briefly, correlation matrices) \cite{CT}, \cite[Chapter 31.5]{DL}. 
 Additionally, we used the set $\mathcal{D}(A_1, A_2, \hdots, A_k)$ to provide a 
 description of the extreme non-commutative covariance matrices associated to Hermitian tuples (see \cite{L}).
  
  We recall that whenever $D$ is an extreme point of  $\mathcal{D}(A_1, A_2, \hdots, A_k),$ one has the rank estimate \cite[Corollary 1]{L}
 \begin{equation}
   \mbox{rank } D \leq \sqrt{k+1}.  
 \end{equation}
We remark that the proof of the previous inequality is closely related to a method invented by C.K. Li and B.S. Tam \cite{CT} in order to describe 
the extreme correlation matrices. 

 Turning back to moment inequalities, Audenaert's theorem \cite{A1} on the (quantum) standard deviation states that for any $A \in M_n(\mathbb{C})$ there exists a rank-one orthogonal projection $P$ such that
 $$ \max_{D \geq 0, \\ {\rm Tr} D = 1} \: ({\rm Tr } \: [D|A- {\rm Tr } \: [DA]|^2])^{1/2} =  ({\rm Tr } \: [P|A- {\rm Tr } \: [PA]|]^2)^{1/2} =  \min_{\lambda \in \mathbb{C}} \|A - \lambda I\| .$$
  This result was proved directly in \cite{BS} and in \cite{BG} for $C^*$-algebras by means of a characterization of the Birkhoff--James orthogonality in matrix and operator algebras, respectively. 
 
 Throughout the paper we say that an $n\times n$ matrix $D$ is a
 density if $D$ is positive semidefinite and Tr $D = 1.$
 Exploiting the aforementioned rank estimate, now we can prove the following   

\begin{thm}
  Let $D \in M_n(\mathbb{C})$ be a density. For any $1 \leq p < \infty$ and $A \in M_n(\mathbb{C}),$ 
  there exists a rank-one orthogonal projection $P \in M_n(\mathbb{C})$ such that 
  $$ {\rm Tr } \: [D|A- {\rm Tr } \: [DA]|^p] =  {\rm Tr } \: [P|A- {\rm Tr } \: [PA]|^p]. $$
\end{thm}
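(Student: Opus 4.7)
The strategy is to confine $D$ to an affine slice of the density set on which the $p^{\rm th}$ central moment functional becomes \emph{linear}, then use the rank bound (1) to decompose $D$ into rank-one projections, and finally pass from a convex combination to a single rank-one projection via an intermediate value argument.

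Write $A = A_1 + iA_2$ with $A_1, A_2$ Hermitian, and set $\alpha_j := \mathrm{Tr}[DA_j]$, so that $\alpha := \mathrm{Tr}[DA] = \alpha_1 + i\alpha_2$. Then $D$ lies in the convex, compact set $\mathcal{D}(A_1, A_2)$ introduced above, and every $X \in \mathcal{D}(A_1, A_2)$ satisfies $\mathrm{Tr}[XA] = \alpha$. Consequently, on this set the map
$$
 X \mapsto \mathrm{Tr}\bigl[X\,|A - \mathrm{Tr}[XA]\,I|^p\bigr] \;=\; \mathrm{Tr}\bigl[X\,|A - \alpha I|^p\bigr]
$$
is the restriction of the \emph{linear} functional $X \mapsto \mathrm{Tr}[XB]$ with $B := |A - \alpha I|^p$ a fixed positive matrix.

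By Carath\'eodory's theorem, write $D = \sum_i \lambda_i E_i$ as a finite convex combination of extreme points $E_i$ of $\mathcal{D}(A_1, A_2)$. The rank estimate (1) with $k = 2$ gives $\mathrm{rank}(E_i) \le \sqrt{3} < 2$; combined with $E_i \ge 0$ and $\mathrm{Tr}[E_i] = 1$, this forces each $E_i$ to be a rank-one orthogonal projection. The linearity noted above then yields
$$
 \mathrm{Tr}\bigl[D\,|A - \mathrm{Tr}[DA]\,I|^p\bigr] \;=\; \sum_i \lambda_i\, c_i, \qquad c_i := \mathrm{Tr}\bigl[E_i\,|A - \mathrm{Tr}[E_i A]\,I|^p\bigr],
$$
so the left-hand side lies in the closed interval $[\min_i c_i, \max_i c_i]$.

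To finish, observe that the set of rank-one orthogonal projections in $M_n(\mathbb{C})$ is path-connected (being the continuous image of the unit sphere of $\mathbb{C}^n$ under $x \mapsto xx^*$), and that the map $P \mapsto \mathrm{Tr}[P\,|A - \mathrm{Tr}[PA]\,I|^p]$ is continuous on it. Its image is therefore a connected, hence interval, subset of $\mathbb{R}$, containing every $c_i$ and in particular their convex combination. The intermediate value theorem then produces a rank-one projection $P$ with $\mathrm{Tr}[P\,|A - \mathrm{Tr}[PA]\,I|^p] = \mathrm{Tr}[D\,|A - \mathrm{Tr}[DA]\,I|^p]$. The only delicate step is the identification of the extreme points of $\mathcal{D}(A_1,A_2)$ with rank-one projections, which is precisely what (1) delivers; everything else is either a direct computation or a soft topological argument.
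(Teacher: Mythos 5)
Your proposal is correct, and it finishes the proof by a genuinely different mechanism than the paper, although both rest on the same key ingredient, the rank estimate (1) for extreme points of a constrained set of densities. The paper pins the moment value itself into the constraint set: after reducing to $\alpha={\rm Tr}[DA]$ real, it considers the densities $X$ with ${\rm Tr}[X|A-\alpha I|^p]={\rm Tr}[D|A-\alpha I|^p]$ and ${\rm Tr}[X\tfrac{A+A^*}{2}]=\alpha$ (so $k=2$), and then any extreme point is rank one by (1) and already carries the required moment value, with no further argument. You instead constrain both Hermitian parts $A_1,A_2$, which makes ${\rm Tr}[XA]=\alpha$ hold exactly on the whole slice and turns the central moment into the linear functional $X\mapsto{\rm Tr}[XB]$, $B=|A-\alpha I|^p$; you then decompose $D$ into rank-one extreme points via Minkowski--Carath\'eodory and recover the exact value by an intermediate value argument over the path-connected family of rank-one projections. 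Your route costs the extra Carath\'eodory-plus-IVT step, but it buys a genuine advantage: your extreme points satisfy ${\rm Tr}[E_iA]=\alpha$ exactly (imaginary part included), so $c_i={\rm Tr}[E_i|A-\alpha I|^p]$ is automatic, whereas in the paper's slice an extreme point $P$ is a priori only known to satisfy ${\rm Re}\,{\rm Tr}[PA]=\alpha$, and the passage from ${\rm Tr}[P|A-\alpha I|^p]$ to ${\rm Tr}[P|A-{\rm Tr}[PA]|^p]$ is glossed over by the ``without loss of generality $\alpha$ real'' reduction. The individual steps you invoke (extreme-point decomposition in finite dimensions, a rank-one trace-one positive matrix being an orthogonal projection, continuity of $P\mapsto{\rm Tr}[P|A-{\rm Tr}[PA]|^p]$, and path-connectedness of $\{xx^*:\|x\|=1\}$) are all sound, so the argument stands as written.
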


\begin{proof}
    Without loss of generality we can assume that $\mbox{Tr }[DA] = \alpha$ is real, 
    hence $\displaystyle \mbox{Tr }  D{A + A^* \over 2} = \alpha$ holds as well.   Let us introduce the convex set
  \begin{eqnarray*}
   \begin{split}
    \mathcal{D}(|A - \alpha I_n|^p,{A+A^* \over 2}) := \Bigl\{ X \mbox{ density } \colon \mbox{Tr } [X|A - \alpha I_n|^p] 
                                                          &=  \mbox{Tr }[D|A - \alpha I_n|^p] \\ & \mbox{ and  Tr } X{A + A^* \over 2} = \alpha \Bigr\}.
   \end{split}                                                       
  \end{eqnarray*}
   Obviously, $\mathcal{D}(|A - \alpha I_n|^p,{A+A^* \over 2})$ is non-empty.
   Relying upon the inequality $(1),$  the rank of extreme points of $\mathcal{D}$ is at most $\sqrt{3}.$ 
   Since any rank-$1$ density is an orthogonal projection, the proof is complete. 
\end{proof}

 Now we can prove the main theorem of the section.

\subsection{A 4-order moment estimate}
  
 \begin{thm} Let $A \in M_n(\mathbb{C})$ and let $D \in M_n(\mathbb{C})$ be a density matrix. Then
  $$ {\rm Tr } \: [D|A- {\rm Tr } \: [DA]|^4] \leq  {4 \over 3} \min_{\lambda \in \mathbb{C}} \|A-\lambda I_n \|^4.$$
 \end{thm}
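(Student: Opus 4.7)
The plan is to combine the rank-one reduction of Theorem 2 with a partial-isometry dilation, thereby reducing the claim to the partial-isometry case handled by Theorem 1.

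First I normalize: since the left-hand side is invariant under $A \mapsto A - \lambda I$ (both $A$ and $\mathrm{Tr}[DA]$ shift by $\lambda$) and both sides are homogeneous of degree four, I may translate by a Chebyshev center of $A$ and rescale so that $\|A\| = \min_\lambda \|A - \lambda I\| = 1$. The target inequality becomes $\mathrm{Tr}[D|A - \mathrm{Tr}(DA)|^4] \leq 4/3$. Applying Theorem 2 with $p = 4$ yields a rank-one orthogonal projection $P = xx^*$ such that $\mathrm{Tr}[D|A - \mathrm{Tr}(DA)|^4] = \mathrm{Tr}[P|A - \alpha|^4]$ where $\alpha := \langle Ax, x\rangle$; a unit-modulus rotation of $A$ lets me assume $\alpha \in \mathbb{R}$.

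Next I perform the dilation: set
\[
V := \begin{pmatrix} A & 0 \\ (I - A^*A)^{1/2} & 0 \end{pmatrix} \in M_{2n}(\mathbb{C}), \qquad Q := P \oplus 0_n.
\]
A short computation gives $V^*V = I_n \oplus 0_n$, so $V$ is a partial isometry, $Q$ is a rank-one orthogonal projection in $M_{2n}$, and $\mathrm{Tr}[QV] = \mathrm{Tr}[PA] = \alpha$. Theorem 1 applied to $V, Q$ then gives $\mathrm{Tr}[Q|V - \alpha|^4] \leq 4/3$. The proof is therefore complete once one establishes the dilation estimate
\[
\mathrm{Tr}[P|A - \alpha|^4] \leq \mathrm{Tr}[Q|V - \alpha|^4].
\]

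To verify this, I write $\tilde x := x \oplus 0$, use $V^*V \tilde x = \tilde x$, and the identity $(1 + \alpha^2)I - \alpha(A + A^*) = |A - \alpha|^2 + (I - A^*A)$ to obtain the block decomposition
\[
|V - \alpha I|^2 \tilde x = \bigl(|A - \alpha|^2 x + (I - A^*A) x\bigr) \oplus \bigl(-\alpha (I - A^*A)^{1/2} x\bigr).
\]
Squaring the $2n$-dimensional norm and subtracting $\||A - \alpha|^2 x\|^2$, the difference $\mathrm{Tr}[Q|V - \alpha|^4] - \mathrm{Tr}[P|A - \alpha|^4]$ equals
\[
2\,\mathrm{Re}\,\langle |A - \alpha|^2 x, (I - A^*A) x\rangle + \|(I - A^*A) x\|^2 + \alpha^2 \langle (I - A^*A) x, x\rangle.
\]
The main obstacle is showing this expression is non-negative under $A^*A \leq I$ and $\alpha = \langle Ax, x\rangle$. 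Expanding the cross term via the anticommutator identity $2\,\mathrm{Re}\langle Sx, Tx\rangle = \langle (ST + TS) x, x\rangle$ for self-adjoint $S, T$, and using $\|(I - A^*A)x\|^2 = 1 - 2\|Ax\|^2 + \|A^*Ax\|^2$ together with $\|A^*Ax\|^2 \leq \|Ax\|^2 \leq 1$, the required non-negativity reduces to a scalar inequality that I would verify by direct (if slightly tedious) algebraic manipulation in the quantities $\|Ax\|^2$, $\|A^*Ax\|^2$ and $\alpha$.
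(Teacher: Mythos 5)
Your overall plan (rank-one reduction via Theorem 2, dilation to a partial isometry, then Theorem 1) is the paper's plan, but your dilation is oriented the wrong way, and the step you defer at the end is not merely tedious --- it is false. With your choice $V=\begin{pmatrix} A & 0\\ (I-A^*A)^{1/2} & 0\end{pmatrix}$ the quantity you must show non-negative is
\[
E \;=\; 2\,\mathrm{Re}\,\langle |A-\alpha|^2x,\,(I-A^*A)x\rangle \;+\; \|(I-A^*A)x\|^2 \;+\; \alpha^2\langle (I-A^*A)x,x\rangle ,
\]
and this can be negative under exactly your hypotheses. Take $A=\begin{pmatrix}0&1\\0&0\end{pmatrix}$ (so $\|A\|=\min_{\lambda}\|A-\lambda I\|=1$) and a real unit vector $x=(x_1,x_2)$ with $x_2^2=t$. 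Then $I-A^*A=\mathrm{diag}(1,0)$, $\alpha=\langle Ax,x\rangle=x_1x_2$, and a direct computation gives $2\langle |A-\alpha|^2x,(I-A^*A)x\rangle=-2\alpha^2x_2^2$, hence $E=x_1^2+\alpha^2x_1^2-2\alpha^2x_2^2=(1-t)(1+t-3t^2)$, which is negative for $t$ close to $1$ (e.g.\ $t=0.9$ gives $E\approx-0.053$). So for the density $D=P=xx^*$ your claimed comparison $\mathrm{Tr}[P|A-\alpha|^4]\le \mathrm{Tr}[Q|V-\alpha|^4]$ fails, and the argument breaks at its last step; no algebraic manipulation in $\|Ax\|^2$, $\|A^*Ax\|^2$, $\alpha$ can rescue it.

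The cure is to put the defect operator in the other corner, as the paper does: with $V=\begin{pmatrix} A & (I-AA^*)^{1/2}\\ 0 & 0\end{pmatrix}$ one still has a partial isometry ($VV^*=I\oplus 0$) and still $\mathrm{Tr}[(P\oplus 0)V]=\mathrm{Tr}[PA]$, but now the $(1,1)$ block of $|V-\alpha I|^2$ is exactly $|A-\alpha|^2$, so the $(1,1)$ block of $|V-\alpha I|^4$ equals $|A-\alpha|^4+(A-\alpha)^*(I-AA^*)(A-\alpha)$, and the correction term is manifestly positive semidefinite. Hence $\mathrm{Tr}[D|A-\alpha|^4]\le \mathrm{Tr}[\tilde D|V-\alpha|^4]$ holds for \emph{every} density (so the order of the two reductions is immaterial; the paper dilates first and applies Theorem 2 in $M_{2n}$ afterwards), and Theorem 1 then finishes the proof exactly as you intended.
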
 
  \begin{proof} 
   Without loss of generality, we can assume that $\|A\| = 1$ holds. Now form the partial isometry (see \cite{H})
    $$ V = \begin{bmatrix} A & (I - AA^*)^{1/2} \cr  0 & 0 \end{bmatrix} \quad \in M_{2n}(\mathbb{C}) .$$
   Let
    $$ \tilde{D}:= \begin{bmatrix} D & 0 \cr  0 & 0 \end{bmatrix},$$
    which is a density matrix, of course. Then a straightforward calculation gives that
     $$ |V- {\rm Tr } \: [\tilde{D}V]|^4 =  \begin{bmatrix} |A-{\rm Tr } \: [DA]|^4 + X & \quad \qquad * \quad \qquad \cr * & * \end{bmatrix}, $$
   where $X = (A-{\rm Tr } \: [DA])^*(I - AA^*)(A-{\rm Tr } \: [DA]) \geq  0,$ hence
      $$ |A- {\rm Tr } \: [DA]|^4 \leq \begin{bmatrix} I & 0 \cr  0 & 0 \end{bmatrix} |V- {\rm Tr } \: [\tilde{D}V]|^4 \begin{bmatrix} I & 0 \cr  0 & 0 \end{bmatrix}. $$
  Therefore the inequality 
    $$ {\rm Tr } \: [D|A- {\rm Tr } \: [DA] I_n|^4] \leq {\rm Tr } \: [\tilde{D}|V- {\rm Tr } \: [\tilde{D}V]I_{2n}|^4]$$
    follows. 
   Additionally, relying on Theorem 2, one can assume that $\tilde{D}= xx^*$ is a rank-one orthogonal projection with some unit vector  $x.$
   Then Theorem 1 immediately gives that
       \begin{eqnarray*}
     \begin{split}
        \mbox{Tr } [D|A- \mbox{Tr} [DA]|^4] &\leq  \mbox{Tr }[ \tilde{D}|V- \mbox{Tr} [\tilde{D}V]|^4] \\
                                            &\leq {4 \over 3} \|V\|^4 \\
                                            &= {4 \over 3} \|A\|^4. 
     \end{split}
     \end{eqnarray*} 
   Changing $A$ to $A - \lambda I,$ we get the proof of the statement.  
   \end{proof}

 Surprisingly, the next example shows that if $A$ is not normal the previous upper bound is not necessarily sharp. Please, compare it with Example 1 and Audenaert's theorem
 on the noncommutative variance.

\noindent {\bf Example 2.} Let $A$ denote the Jordan block $\begin{bmatrix}
                                                           1 & 1 \cr 0 & 1 
                                                          \end{bmatrix}
.$ We  calculate the value of 	
     $$ \mu_4(A) := \max  \: \{ {\rm Tr } \: [D|A- {\rm Tr } \: [DA]|^4]  \colon 0 \preceq D \in M_2(\mathbb{C}) \mbox{ and Tr } D=1 \}. $$
Indeed, from Theorem 2 we can find a projection $P = zz^*,$ $z^* = \begin{bmatrix} z_1 & z_2 
                                                                 \end{bmatrix} \in \mathbb{C}^2$ and $|z_1|^2 + |z_2|^2 =1 ,$ such that
      $$ \mu_4(A) = {\rm Tr } \: [P|A- {\rm Tr } \: [PA]|^4] . $$
Then a little computation gives that
 \begin{eqnarray*}
   \begin{split}
    {\rm Tr } \: [P|A- {\rm Tr } \: [PA]|^4]  =& |z_1|^6|z_2|^4 + |z_1|^4|z_2|^2 +|z_1|^2|z_2|^4 \\ 
                                          & - 2|z_1|^2|z_2|^2(2|z_1|^2|z_2|^2 + 1) + |z_2|^2(1+|z_1|^2|z_2|^2)^2 \\
                                          =& 4 |z_2|^6-3 |z_2|^8 =: p(|z_2|).  
   \end{split}
 \end{eqnarray*}
Note that $\max_{|z_2| \leq 1} p(|z_2|) = p(1) = 1.$ Furthermore, it is simple to check that  $\min_{\lambda \in \mathbb{C}} \|A-\lambda \| = \|A-I_2\| = 1.$ 
Hence we get that
   $$   \mu_4(A) = \min_{\lambda \in \mathbb{C}} \|A-\lambda\|^4. $$
However,  from  \cite[Theorem 4]{L2}  we have $$ \mu_4(A) = {4 \over {3}} \min_{\lambda \in \mathbb{C}} \|A-\lambda\|^4 $$
for any normal $A.$

\medskip

Here we make a direct application of Theorem 3 to obtain a lower bound for the spread of normal and Hermitian matrices.
If $A$ is an $n \times n$ matrix and $\lambda_i(A)$ $(1 \leq i \leq n)$
denote its eigenvalues then the spread of $A$ is
 $$ \mbox{spd}(A) = \max_{i, j} |\lambda_i(A) - \lambda_j(A)|. $$ 
 Spread estimates were initiated by L. Mirsky in his seminal papers \cite{M1} and \cite{M2}. 
 After that several author provided upper and lower bounds for it, see
 \cite{BH}, \cite{BSh2}, \cite{JKW} and \cite{MK}, for instance, and the references therein.
 
For a normal $A$ the spectral theorem gives that $\min_{\lambda \in \mathbb{C}} \|A-\lambda I \|=r_A,$ 
where $r_A$ denotes the radius of the smallest
disk that contains the eigenvalues of $A.$ 
Jung's theorem on the plane says that if $\mathcal{F}$ is a finite set of points of diameter $d$ then $\mathcal{F}$
must be contained in a closed disk of radius $d/\sqrt{3},$  see \cite[Chapter 16]{RT}. Hence, for any normal $A,$ 
 \begin{eqnarray}
   \min_{\lambda \in \mathbb{C}} \|A-\lambda I \|  \leq {1 \over \sqrt{3}} {\rm spd}(A)  
 \end{eqnarray}
(see \cite[p. 1567--1568]{BSh}).

\begin{cor}
  Let $A \in M_n(\mathbb{C})$ be a normal matrix. Then
  $$ {\rm Tr } \:[ D|A- {\rm Tr} \: [DA] |^4] \leq {4 \over 27} {\rm spd }(A)^4. $$
  Moreover, if $A$ is Hermitian then
   $$ {\rm Tr } \: [D|A- {\rm Tr} \: [DA]|^4] \leq { {\rm spd }(A)^4 \over 12}.$$
 \end{cor}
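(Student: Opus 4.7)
The plan is to derive both inequalities as direct consequences of Theorem 3 combined with geometric estimates of $\min_{\lambda \in \mathbb{C}} \|A - \lambda I\|$ in terms of $\mathrm{spd}(A)$. Theorem 3 already delivers
\[
 \mathrm{Tr}\,[D |A - \mathrm{Tr}\,[DA]|^4] \leq \frac{4}{3} \min_{\lambda \in \mathbb{C}} \|A - \lambda I\|^4,
\]
so the whole task reduces to bounding $\min_{\lambda} \|A - \lambda I\|$ by an appropriate multiple of $\mathrm{spd}(A)$ in each of the two cases.

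For the normal case, I would invoke the spectral theorem to identify $\min_\lambda \|A - \lambda I\|$ with $r_A$, the circumradius of the spectrum of $A$ in the plane, and then apply Jung's theorem as the authors recall in inequality $(2)$: since $\sigma(A)$ is a finite planar set whose diameter equals $\mathrm{spd}(A)$, it is contained in a closed disk of radius $\mathrm{spd}(A)/\sqrt{3}$. Plugging $\min_\lambda \|A-\lambda I\|^4 \leq \mathrm{spd}(A)^4/9$ into Theorem 3 yields the claimed factor $\frac{4}{3}\cdot \frac{1}{9} = \frac{4}{27}$.

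For the Hermitian case, the spectrum is a subset of $\mathbb{R}$, so the analogue of Jung's inequality on the line is the trivial observation that any finite real set of diameter $d$ is contained in a closed interval of radius $d/2$, centered at the midpoint of the extreme eigenvalues. Hence $\min_\lambda \|A - \lambda I\| \leq \mathrm{spd}(A)/2$, and Theorem 3 then gives $\frac{4}{3} \cdot \frac{1}{16} = \frac{1}{12}$, as required.

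There is really no serious obstacle here; the argument is a two-line application of Theorem 3 together with the standard planar and one-dimensional Jung-type covering estimates. The only point worth mentioning explicitly in the write-up is that in the Hermitian case one should not use the planar Jung bound (which is not tight on the real line) but instead the sharper bound $r_A = \mathrm{spd}(A)/2$.
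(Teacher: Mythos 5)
Your proposal is correct and matches the paper's own (implicit) argument exactly: the corollary is stated as a direct application of Theorem 3 combined with inequality (2) (Jung's theorem giving $\min_\lambda\|A-\lambda I\|\le \mathrm{spd}(A)/\sqrt{3}$ for normal $A$) and the sharper one-dimensional bound $\min_\lambda\|A-\lambda I\|=\mathrm{spd}(A)/2$ in the Hermitian case, yielding the constants $4/27$ and $1/12$ respectively.
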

 
 For a different proof of the last statement, the reader might see \cite[p. 169 Remark]{L2}, \cite[Theorem 3]{K} and \cite[Theorem 2]{L2}
 for the normal case in Theorem 3.
 
 We recall that the quantity $\Delta(A) \equiv \min_{\lambda \in \mathbb{C}} \|A-\lambda I \|$  appeared in Stampfli's well-known result \cite{St} for the derivation norm
 $$2\Delta(A) = \max_{\|X\|=1} \|AX-XA\|,$$ 
 while the diameter of the unitary orbit of $A$ is given by the formula $$2\Delta(A) = \max \{\|A - UAU^*\| \colon U \mbox{ is unitary} \},$$
 see \cite{BS}. 
 Hence any lower estimate of $\Delta(A)$ in terms of central moments might have its own interest. In the case of the noncommutative variance,  this
 method was first exploited by R. Bhatia and R. Sharma in a series of papers \cite{BSh}, \cite{BSh2}. 
 Choosing different density matrices in the variance inequality, they got several interesting inequalities for $\Delta(A)$ and the spread of $A$, as well. 
 Briefly, the idea of non-commutative variance estimates turned out be fruitful and led to simple new proofs of known spread estimates, including Mirsky's and Barnes--Hoffman's
 lower bounds (see \cite{BSh} for details). 

\subsection{Remark}
 
  Let $\omega$ be a positive linear functional of $M_n(\mathbb{C}).$ 
  Then the map $A \mapsto \omega(|A|^p)^{1/p}, p \neq 2,$ is not a  norm on $M_n(\mathbb{C}),$ because the triangle inequality fails, in general.
  However, the monotonicity statement 
  $$ \omega(|A|^p)^{1/p} \leq  \omega(|A|^{p'})^{1/{p'}}$$
  clearly holds for all $1 \leq p \leq p' < \infty.$  In fact, $\omega(A) = \mbox{Tr } DA$ with some $D \geq 0$ and Tr $D=1$. Furthermore, 
  we can assume that $|A| = (A^*A)^{1/2}$ is diagonal, hence the discrete Hölder-inequality gives that
   $$ \left(\sum_{i=1}^n d_{ii} a_{ii}^p \right)^{1/p}  \leq  \left(\sum_{i=1}^n d_{ii} a_{ii}^{p'} \right)^{1/{p'}},$$
  which is exactly what we need. Moreover, $$\omega(|A- \omega(A)|^2)^{1/2} = (\omega(|A|^2) - |\omega(A)|^2)^{1/2} \leq \omega(|A|^2)^{1/2} \leq \|A\|.$$
  Therefore, we get for any $1 \leq p \leq 2$ and $A \in M_n(\mathbb{C})$ that 
   $$ \omega(|A- \omega(A)|^p)^{1/p} \leq \min_{\lambda \in \mathbb{C}} \|A-\lambda\|.$$ 
   
   Note that a simple calculus gives that  
    $$2 \max \{ (\mathbb{E}|X - \mathbb{E}(X)|^p)^{1/p}  \colon X \mbox{ Bernoulli random variable }\} = 1 $$ if $1 \leq p \leq 2,$ hence the commutative bound turns out to be
    a tight bound in the non-commutative case as well. Actually, set $A = \begin{bmatrix}
                                                           1 & 1 \cr 0 & 1 
                                                          \end{bmatrix} $ and $P = \begin{bmatrix}
                                                           0 & 0 \cr 0 & 1 
                                                          \end{bmatrix}.$ 
   Then  $${\rm Tr } \: [P|A- {\rm Tr} \: [PA]|^p] = 1 =  \min_{\lambda \in \mathbb{C}} \|A-\lambda\|.$$

\subsection{Remark} It would be interesting to know whether $\Phi \colon M_n(\mathbb{C}) \rightarrow M_k(\mathbb{C})$ is a positive unital linear map then  
the inequality 
 $$ \Phi(|A - \Phi(A)|^4)^{1/4} \leq {4 \over 3} \min_{\lambda \in \mathbb{C}} \|A - \lambda I\| $$
holds. The corresponding result for the noncommutative standard deviation was proved in \cite[Theorem 3.1]{BSh}.

\section{Moment inequalities for the tracial state}

We start this section with a moment estimate of matrices, determined by the tracial state. 

\subsection{Central moments for the tracial state}

Let $A$ denote an $n \times n$ matrix and let us define its Schatten $p$-norm
 $$ \|A\|_p= \left(\mbox{Tr } |A|^p \right)^{1/p},$$
where $1 \leq p <  \infty$ and $|A| = (A^*A)^{1/2}$ by definition. Then $\|\cdot\|_p$ is a norm on $M_n(\mathbb{C}).$ 
We recall that the duality formula 
 $$ \|A\|_p= \max \{ |\mbox{Tr } [B^*A]| \colon \|B\|_q \leq 1  \} $$
  holds with $1/p + 1/q = 1$ (\cite[Theorem 7.1]{C}). 
 
 Let $X$ be a (real) discrete random variable on a finite set $\{1, \hdots, n\}$. We recall that $\mathbb{E}(|X - \mathbb{E}(X)|^p)$ is the largest 
 if the underlying probability measure is concentrated on at most two atoms (for instance, see the proof \cite[p. 168]{L2}). Moreover,
 $\alpha\mu^{1/p}_p(X) = \alpha\mu^{1/p}_p(X-\beta),$ for any real $\alpha$ and $\beta,$ thus
  $$ \mathbb{E}(|X - \mathbb{E}(X)|^p)^{1/p} \leq b_p^{1/p} \max_{i,j} |X(i) - X(j)|, $$ where $b_p = \max_{0 \leq  t \leq 1} t^p(1-t) + t(1-t)^p.$
 
\begin{lemma} Let $A \in M_n(\mathbb{C})$ be a normal matrix and let $1 \leq p < \infty.$ Then
  $$ \left({\rm Tr} \: D\left|A - {\rm Tr } \: DA\right|^p\right)^{1/p} \leq 2b_p^{1/p} \min_{\lambda \in \mathbb{C}} \|A-\lambda I_n \|$$
  holds, where $b_p$ denotes the largest $p^{\rm th}$ central moment of the Bernoulli distribution. 
 \end{lemma}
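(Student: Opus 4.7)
The plan is to use Theorem 2 to reduce to a rank-one density, invoke the spectral theorem to translate the trace into a discrete expectation, and then apply the discrete moment estimate recalled just before the lemma.

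By Theorem 2, there exists a unit vector $x$ such that with $P := xx^*$ one has
\[
  {\rm Tr}[D|A - {\rm Tr}[DA]|^p] = {\rm Tr}[P|A - {\rm Tr}[PA]|^p].
\]
Normality of $A$ provides a spectral decomposition $A = \sum_{i=1}^n \lambda_i e_i e_i^*$ with orthonormal eigenvectors $e_i$. Setting $p_i := |\langle x, e_i\rangle|^2$ makes $(p_i)_{i=1}^n$ a probability vector; let $X$ be the discrete random variable taking the value $\lambda_i$ with probability $p_i$. Using the normality of $A$, which gives $|A - cI|^p = \sum_i |\lambda_i - c|^p\, e_i e_i^*$ for every $c \in \mathbb{C}$, a direct computation yields $\mathbb{E}(X) = {\rm Tr}[PA]$ together with
\[
  {\rm Tr}\bigl[P\,|A - {\rm Tr}[PA]|^p\bigr]
  = \sum_{i=1}^n p_i\,|\lambda_i - \mathbb{E}(X)|^p
  = \mathbb{E}\bigl(|X - \mathbb{E}(X)|^p\bigr).
\]

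Applying the discrete moment estimate stated just above the lemma to this $X$ then gives
\[
  \bigl({\rm Tr}[D|A - {\rm Tr}[DA]|^p]\bigr)^{1/p}
  \leq b_p^{1/p}\, \max_{i,j}|\lambda_i - \lambda_j|
  = b_p^{1/p}\, {\rm spd}(A).
\]
Finally, since $A$ is normal, $\|A - \lambda I\| = \max_i |\lambda_i - \lambda|$, so $r := \min_{\lambda \in \mathbb{C}}\|A - \lambda I\|$ is precisely the radius of the smallest closed disk in $\mathbb{C}$ enclosing $\sigma(A)$. In particular the whole spectrum sits inside a disk of diameter $2r$, hence ${\rm spd}(A) \leq 2\min_{\lambda \in \mathbb{C}}\|A - \lambda I\|$; substituting into the previous display gives the claimed inequality.

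There is no substantial obstacle: Theorem 2 does the real work by eliminating the density, after which the problem collapses, via the spectral calculus for normal matrices, to the already-recalled commutative moment bound. The only mild point to verify is that this discrete inequality, stated for real $X$, is being applied here with possibly complex $\lambda_i$; however, the two-atom reduction underlying it is purely a convex-analytic argument that depends only on the diameter of the support, so it goes through unchanged.
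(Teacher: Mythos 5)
Your reduction via Theorem 2 and the identification ${\rm Tr}[P|A-{\rm Tr}[PA]|^p]=\mathbb{E}(|X-\mathbb{E}X|^p)$ for the complex-valued random variable $X$ taking the value $\lambda_i$ with probability $|\langle x,e_i\rangle|^2$ are both fine, as is the last step ${\rm spd}(A)\le 2\min_\lambda\|A-\lambda I\|$. The gap is exactly the point you wave off at the end: the estimate $\mathbb{E}(|X-\mathbb{E}X|^p)\le b_p\max_{i,j}|X(i)-X(j)|^p$ recalled before the lemma is a statement about \emph{real} random variables, and it is false for complex-valued ones. Concretely, take $p=4$, $A=\mathrm{diag}(1,\omega,\omega^2)$ with $\omega=e^{2\pi i/3}$, and $P=xx^*$ with $x=\tfrac{1}{\sqrt3}(1,1,1)^*$. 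Then ${\rm Tr}[PA]=0$ and ${\rm Tr}[P|A-{\rm Tr}[PA]|^4]=1$, while $b_4\,{\rm spd}(A)^4=\tfrac{1}{12}\cdot 9=\tfrac34<1$; so your intermediate inequality $({\rm Tr}[P|A-{\rm Tr}[PA]|^p])^{1/p}\le b_p^{1/p}\,{\rm spd}(A)$ already fails for a normal matrix and a rank-one projection. The two-atom reduction does not ``go through unchanged'' because it is an extreme-point argument over weights with a \emph{prescribed mean}: for real values the mean is one linear constraint, so extreme measures have at most two atoms, but a complex mean imposes two real constraints, so extreme measures may have three atoms, and the uniform measure on the vertices of an equilateral triangle (the example above) is precisely such a three-atom measure beating the diameter bound. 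The lemma itself survives in this example only because its right-hand side is $2b_p^{1/p}\min_\lambda\|A-\lambda I\|$, the diameter of the smallest enclosing disk, which here equals $2>\sqrt3={\rm spd}(A)$; by Jung's theorem these two quantities can differ by a factor $2/\sqrt3$, and that is exactly the slack your argument discards.

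This is also why the paper does not argue through the spread at all. Its proof first treats Hermitian $H$, where the eigenvalues are real, the Bernoulli bound applies, and ${\rm diam}\,\sigma(H)=2\min_\lambda\|H-\lambda I\|$; for normal $A$ it then centers the smallest enclosing circle of $\sigma(A)$ at the origin and passes, via the doubled matrices $\tilde A$, $\tilde H$ (with spectrum $\{\pm|\lambda_i|\}$) and $\tilde D=D\oplus 0$ in $M_{2n}(\mathbb{C})$, to the Hermitian case, whose diameter is $2\max_i|\lambda_i|=2\min_\lambda\|A-\lambda I\|$. To repair your route you would need a genuinely planar commutative estimate for complex-valued variables supported in a disk of radius $r$, namely $\mathbb{E}(|X-\mathbb{E}X|^p)\le 2^p b_p r^p$ as in \cite[Theorem 2]{L2}, rather than the real diameter bound; with that replacement (and without even needing the reduction through Theorem 2) your computation gives the lemma.
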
 
 
 \begin{proof}  By means of a diagonalization and the previous remarks, for any Hermitian $H$ and density $D$ one has that 
  \begin{equation*}
    ({\rm Tr}\left[D\left|H - {\rm Tr } \: DH\right|^p\right])^{1/p}  \leq  b_p^{1/p} \: {\rm diam } \: \sigma(H) = 2 b_p^{1/p}\min_{\lambda \in \mathbb{C}} \|H-\lambda\|.
  \end{equation*}
  For a normal $A$, $\min_{\lambda \in \mathbb{C}} \|A-\lambda I_n \|$ equals to the radius of the smallest enclosing circle of $\sigma(A).$ 
  Without loss of generality, we can assume that the center of this circle is at the origin. 
  Let us write $A$ as a diagonal matrix $A = \sum_{i=1}^n \lambda_i P_i,$ where $\lambda_i$-s are the eigenvalues of $A$ and $P_i$-s are orthogonal projections.
  Set the diagonal matrices
    $$ \tilde{A}= \begin{bmatrix}
                      \lambda_1 & & & & & \\ & \ddots & & & & \\& & \lambda_n & & & \\ &&& \hspace{-.3cm} - \overline{\lambda}_1 & & \\ &  & & &\ddots  & \\ &&& & & -\overline{\lambda}_n   
                    \end{bmatrix} \mbox{and } \tilde{H}= \begin{bmatrix}
                      |\lambda_1| & & & & & \\ & \ddots & & & & \\& & |\lambda_n| & & & \\ &&& - |\lambda_1| & & \\ &  & & &\ddots  & \\ &&& & &  \hspace{-.3cm}-|\lambda_n|   
                    \end{bmatrix}$$ in
   $M_{2n}(\mathbb{C}).$ Clearly, $\min_{\lambda \in \mathbb{C}} \|A-\lambda\|  =  \min_{\lambda \in \mathbb{C}} \|\tilde{H}-\lambda\|.$  
   Moreover, we obtain with $\tilde{D} = D \oplus  0 \in M_{2n}(\mathbb{C})$ that 
    \begin{eqnarray*}
     \begin{split}
         {\rm Tr}\left[D\left|A - {\rm Tr } \: DA\right|^p\right]^{1/p}  &=  {\rm Tr}\left[\tilde{D}|\tilde{A} - {\rm Tr } \: \tilde{D}\tilde{A}|^p\right]^{1/p} \\
        &= {\rm Tr}\left[\tilde{D}\left( \sum_{i=1}^n |\lambda_i|^p |(P_i \oplus -P_i) - \mbox{Tr}[\tilde{D}(P_i \oplus -P_i) ]|^{p}\right) \right]^{1/p} \\
        &=  {\rm Tr}\left[\tilde{D}|\tilde{H}- {\rm Tr } \: \tilde{D}\tilde{H}|^p\right]^{1/p} , \\
   &\hspace{-3.7cm} \mbox{ and since } \tilde{H} \mbox{ is Hermitian }     \\
        &\leq 2  b_p^{1/p} \min_{\lambda \in \mathbb{C}} \|\tilde{H}-\lambda \| \\
        &= 2  b_p^{1/p}  \min_{\lambda \in \mathbb{C}} \|A-\lambda \|,
     \end{split}
    \end{eqnarray*}
   which completes the proof.  
 \end{proof}

\begin{thm} Let $A \in M_n(\mathbb{C})$ and let $1 \leq p < \infty.$ Then
  $$ \left({1 \over n} {\rm Tr} \: \left|A -  {1 \over n} {\rm Tr } \: A\right|^p\right)^{1/p} \leq 2b_p^{1/p} \min_{\lambda \in \mathbb{C}} \|A-\lambda I_n \|$$
  holds, where $b_p$ denotes the largest $p^{\rm th}$ central moment of the Bernoulli distribution. 
 \end{thm}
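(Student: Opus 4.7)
My plan is to reduce the general case to the Hermitian case already treated in Lemma 2 by passing to a Hermitian dilation. Write $\tau_n(X) := \tfrac{1}{n}\mathrm{Tr}(X)$ for the tracial state on $M_n(\mathbb{C})$ and set $B := A - \tau_n(A)I$; then $\tau_n(B) = 0$ and $\Delta(B) = \Delta(A)$, and the inequality to prove becomes $\tau_n(|B|^p)^{1/p} \le 2b_p^{1/p}\Delta(B)$. Form the $2n\times 2n$ Hermitian dilation
\[
H_B \;=\; \begin{pmatrix} 0 & B \\ B^{*} & 0 \end{pmatrix} \in M_{2n}(\mathbb{C}).
\]
Its diagonal blocks vanish, so $\tau_{2n}(H_B) = 0$; since $H_B^{2} = BB^{*} \oplus B^{*}B$ and the singular values of $B$ and $B^{*}$ coincide, one checks that $\tau_{2n}(|H_B|^p) = \tau_n(|B|^p)$ for every $p \ge 1$.

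Now $H_B$ is Hermitian and hence normal, and its spectrum is $\{\pm\sigma_i(B)\}$, which forces $\Delta(H_B) = \|B\|$. Applying Lemma 2 to $H_B$ with the tracial state on $M_{2n}(\mathbb{C})$ therefore produces
\[
\tau_n(|B|^p)^{1/p} \;=\; \tau_{2n}(|H_B|^p)^{1/p} \;\le\; 2b_p^{1/p}\,\|B\|.
\]
For $p\leq 2$ the constant $2b_p^{1/p}$ equals $1$, and the Remark preceding this section — based on the monotonicity of $\tau_n(|\cdot|^p)^{1/p}$ in $p$ together with the variance estimate $\tau_n(|A-\tau_n(A)I|^{2}) \leq \Delta(A)^{2}$ — immediately gives $\tau_n(|B|^p)^{1/p} \leq \Delta(A)$, finishing the argument in that range.

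For $p>2$, the displayed bound would finish the proof only if $\|A-\tau_n(A)I\|$ were dominated by $\Delta(A)$, which is false in general (one only has $\|A-\tau_n(A)I\|\leq 2\Delta(A)$). The fix is to rerun the dilation with $B$ replaced by $A-\lambda_{*}I$, where $\lambda_{*}\in\mathbb{C}$ attains $\Delta(A)$, yielding $\tau_n(|A-\lambda_{*}I|^p)^{1/p}\leq 2b_p^{1/p}\Delta(A)$, and then to transfer from the $\lambda_{*}$-centred moment back to the $\tau_n(A)$-centred one. The hard part is precisely this transfer: because the map $c\mapsto\tau_n(|A-cI|^p)$ is not minimised at $c=\tau_n(A)$ for $p\neq 2$, a naive Schatten triangle inequality costs an extra factor of two, and recovering the sharp constant $2b_p^{1/p}$ requires a more delicate convexity argument — for instance, one that pairs the Jensen-type bound $|\tau_n(A-\lambda_{*}I)| \leq \tau_n(|A-\lambda_{*}I|^p)^{1/p}$ with the cyclic invariance of the tracial state.
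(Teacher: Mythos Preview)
Your dilation step is correct as far as it goes: from the Hermitian dilation $H_{A-\lambda}$ and Lemma~2 you do obtain
\[
\Bigl(\tfrac{1}{n}\mathrm{Tr}\,|A-\lambda I|^p\Bigr)^{1/p}\;\le\;2b_p^{1/p}\|A-\lambda I\|\qquad\text{for every }\lambda\in\mathbb{C}.
\]
But this is strictly weaker than the theorem, which centers at $\tau_n(A)=\tfrac1n\mathrm{Tr}\,A$ on the left while taking the minimum over $\lambda$ on the right. You identify the gap yourself: choosing $\lambda=\tau_n(A)$ gives the wrong right-hand side, choosing $\lambda=\lambda_*$ gives the wrong left-hand side, and you do not supply the ``more delicate convexity argument'' needed to pass from one to the other. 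That passage is not routine---for $p>2$ the map $c\mapsto\tau_n(|A-cI|^p)$ is generally \emph{not} minimised at $c=\tau_n(A)$, and the Jensen-type observation you mention does not by itself recover the sharp constant. So for $p>2$ the proof is incomplete.

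The paper avoids this difficulty by a different reduction. After normalising $\|A\|=1$, write the contraction as an average of two unitaries, $A=\tfrac12U_1+\tfrac12U_2$. The point is that the mean-centering distributes over this decomposition,
\[
A-\tau_n(A)\;=\;\tfrac12\bigl(U_1-\tau_n(U_1)\bigr)+\tfrac12\bigl(U_2-\tau_n(U_2)\bigr),
\]
so the Schatten triangle inequality followed by Lemma~2 (applied to the normal matrices $U_1,U_2$) gives $\bigl(\tfrac1n\mathrm{Tr}\,|A-\tau_n(A)|^p\bigr)^{1/p}\le 2b_p^{1/p}\|A\|$ directly, with the mean already in place. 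Replacing $A$ by $A-\lambda I$ then yields the $\min_\lambda$ at no extra cost. In short, the unitary decomposition keeps the centering and the norm bound compatible in a single step, whereas your Hermitian dilation decouples them and leaves you with a transfer problem you have not solved.
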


 \begin{proof}
   Without loss of generality one can assume that $A$ is a contraction, i.e. $\|A\|=1.$
   From the singular value decomposition of $A$, one can find two unitaries $U_1$ and $U_2$ such that
   $$ A = {1\over 2} U_1 + {1\over 2} U_2 $$ (see \cite[p. 62-63]{Bh}, for instance).
   The convexity of the Schatten $p$-norms and the central moment estimates of normal matrices in Lemma 2 imply that  
    \begin{eqnarray*}
      \begin{split}
       \left({1 \over n} {\rm Tr} \: \left|A -  {1 \over n} {\rm Tr } \: A\right|^p\right)^{1/p} &\leq 
        {1\over 2}\left({1 \over n} {\rm Tr} \: \left|U_1 -  {1 \over n} {\rm Tr } \: U_1\right|^p\right)^{1/p} \\
        & \qquad +{1\over 2}\left({1 \over n} {\rm Tr} \: \left|U_2 -  {1 \over n} {\rm Tr } \: U_2\right|^p\right)^{1/p} \\
        &\leq b_p^{1/p}\|U_1\| + b_p^{1/p}\|U_2\|  \\
        &= 2b_p^{1/p}\|A\|. \\
       \end{split} 
    \end{eqnarray*}
  Changing $A$ to $A - \lambda I$ we get the proof of the statement. 
 \end{proof}
 
 An application of Hölder's inequality gives that the function $p \mapsto b_p^{1/p}$ is monotone increasing on $\mathbb{R}_+,$ hence
  $\lim_{p \rightarrow \infty} b_p^{1/p} = b_\infty = 1.$ Similarly, $\|\cdot\|_p \rightarrow \|\cdot\|$ follows for the Schatten $p$-norms, if $p \rightarrow \infty.$
  Therefore, we obtain with (2) at hand that 
 
\begin{cor}
  Let $A \in M_n(\mathbb{C})$ be a normal matrix. Then
  $$ {\sqrt{3} \over 2} \left\| A  - {1 \over n}{\rm  Tr } A \right\| \leq  \: {\rm spd }(A). $$
  Moreover, if $A$ is Hermitian then
   $$ \left\| A  - {1 \over n}{\rm  Tr } A \right\| \leq  \: {\rm spd }(A). $$
 \end{cor}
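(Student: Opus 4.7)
The strategy is to derive Corollary 2 as the $p\to\infty$ limit of Theorem 4, combined with the Jung-type estimate (2) for normal matrices and an elementary identification of $\min_\lambda\|A-\lambda I\|$ in the Hermitian case.

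First I would rewrite the conclusion of Theorem 4 as
$$ \left(\tfrac{1}{n}\right)^{1/p}\bigl\|A - \tfrac{1}{n}{\rm Tr}\,A\cdot I_n\bigr\|_p \le 2 b_p^{1/p}\min_{\lambda\in\mathbb{C}}\|A-\lambda I_n\|, $$
valid for every $1\le p<\infty$. Letting $p\to\infty$, the prefactor $(1/n)^{1/p}$ tends to $1$; the Schatten norms satisfy $\|B\|_p\to\|B\|$ (the operator norm) since for any fixed matrix $B$ one has $\|B\|\le\|B\|_p\le n^{1/p}\|B\|$; and, as already noted in the paper, the monotone quantity $b_p^{1/p}$ increases to $b_\infty=1$. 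Taking the limit in the displayed inequality yields the key intermediate bound
$$ \bigl\|A - \tfrac{1}{n}{\rm Tr}\,A\cdot I_n\bigr\| \le 2\min_{\lambda\in\mathbb{C}}\|A-\lambda I_n\|. $$

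For the normal case I would now invoke inequality (2), i.e.\ Jung's theorem applied to the eigenvalues of $A$ in the plane: $\min_{\lambda}\|A-\lambda I\|\le \tfrac{1}{\sqrt{3}}\,{\rm spd}(A)$. Substituting into the previous display gives $\|A-\tfrac{1}{n}{\rm Tr}\,A\,I\|\le \tfrac{2}{\sqrt 3}\,{\rm spd}(A)$, which rearranges to the first claim. For the Hermitian case the eigenvalues lie on the real line, so the smallest enclosing disc reduces to the smallest enclosing interval; its centre is the midpoint of the spectrum and its radius is exactly ${\rm spd}(A)/2$. Hence $\min_{\lambda\in\mathbb{C}}\|A-\lambda I\|={\rm spd}(A)/2$, and the intermediate bound immediately gives the sharper Hermitian inequality.

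There is no real obstacle here; the one place requiring a moment of care is the limit $\|B\|_p\to\|B\|$, but the two-sided squeeze $\|B\|\le\|B\|_p\le n^{1/p}\|B\|$ makes this routine, and the same squeeze absorbs the stray $(1/n)^{1/p}$ factor. The rest of the argument is simply the assembly of Theorem 4, inequality (2), and the elementary Chebyshev-centre computation for a real interval.
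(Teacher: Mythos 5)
Your proposal is correct and follows exactly the paper's route: take the limit $p\to\infty$ in Theorem 4 (using $b_p^{1/p}\to 1$ and the convergence of Schatten $p$-norms to the operator norm), then combine the resulting bound $\|A-\tfrac1n\mathrm{Tr}\,A\|\le 2\min_\lambda\|A-\lambda I\|$ with inequality (2) for normal $A$ and with $\min_\lambda\|A-\lambda I\|=\mathrm{spd}(A)/2$ in the Hermitian case. Your write-up just makes explicit the routine limit and Chebyshev-centre steps that the paper leaves implicit.
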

 
 \subsection{Central moments of matrix elements}
 
 In this section, we make some estimates of the moments of matrix elements.


 A conditional expectation operator $\mathbb{E}_{\mathfrak{B}}$ is an orthogonal projection from the matrix algebra $M_n(\mathbb{C}),$ endowed with the 
 Hilbert--Schmidt inner product $\langle A,B \rangle = {\rm Tr} \: B^*A$, onto the $*$-subalgebra $\mathfrak{B}$ (see \cite[Section 4.3]{C}). 
 Here we collect a few basic properties of the conditional expectation operator. First, we recall that for any 
 $A \in M_n(\mathbb{C}),$
  $$ \mbox{Tr } A =  \mbox{Tr } \mathbb{E}_{\mathfrak{B}}(A).$$
 Moreover, for each $B \in \mathfrak{B},$ it follows the module properties 
 $$ \mathbb{E}_{\mathfrak{B}}(BA) = B\mathbb{E}_{\mathfrak{B}}(A) \quad \mbox{ and } \quad \mathbb{E}_{\mathfrak{B}}(AB) = \mathbb{E}_{\mathfrak{B}}(A)B.  $$
 A useful property here is that the conditional expectation operators can be uniformly approximated by the convex sums
 of the unitary conjugates of $A$. That is, for all $\varepsilon > 0,$ 
 there  exist unitary operators $U_1, \hdots, U_m$ in the commutant algebra of $\mathfrak{B}$ 
 such that $$\| \mathbb{E}_\mathfrak{B}(A) - \sum_{j=1}^m \lambda_j U_j^* A U_j \| \leq \varepsilon, $$
  $\sum_{j=1}^m \lambda_j = 1$ and $0 < \lambda_1, \hdots, \lambda_m < 1.$
 For a proof of these statements, we refer the reader to \cite[Theorem 4.13]{C}.
  
 The following proposition might be known in the literature, however, we were unable to find any reference.
 
   \begin{prop}
   Let $\mathfrak{C}$ be a unital $*$-subalgebra of $M_n(\mathbb{C})$ 
   and let $\mathbb{E}_\mathfrak{C}$ be the conditional expectation operator onto $\mathfrak{C}.$
   Then 
          $${\rm Tr } \: |\mathbb{E}_\mathfrak{C}(A)|^p \leq {\rm Tr} \: |A|^p,$$
   for every $1 \leq p  < \infty.$         
 \end{prop}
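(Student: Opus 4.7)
\medskip
\noindent\textbf{Proof plan.}
The strategy is to reduce the statement to two well-known ingredients already highlighted in the preceding paragraph: the unitary invariance of the Schatten $p$-norm $\|X\|_p=(\mathrm{Tr}\,|X|^p)^{1/p}$, and the approximation of $\mathbb{E}_\mathfrak{C}$ by convex combinations of unitary conjugations by elements of the commutant $\mathfrak{C}'$ (see \cite[Theorem 4.13]{C}). Since $\mathrm{Tr}\,|A|^p=\|A\|_p^p$ and $p\mapsto t^p$ is monotone on $[0,\infty)$, it suffices to show that $\|\mathbb{E}_\mathfrak{C}(A)\|_p\le\|A\|_p$ for every $A\in M_n(\mathbb{C})$ and every $1\le p<\infty$.

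Concretely, I would proceed as follows. First, fix $\varepsilon>0$ and invoke the cited approximation theorem to choose unitaries $U_1,\dots,U_m\in\mathfrak{C}'$ and weights $\lambda_1,\dots,\lambda_m>0$ with $\sum_j\lambda_j=1$ such that
$$\Bigl\|\mathbb{E}_\mathfrak{C}(A)-\sum_{j=1}^m\lambda_j U_j^*AU_j\Bigr\|\le\varepsilon.$$
Second, since $\|\cdot\|_p$ is a norm on the finite-dimensional space $M_n(\mathbb{C})$ and is equivalent to the operator norm, the same estimate holds in Schatten $p$-norm up to a dimensional constant $C_{n,p}$. Then I apply the triangle inequality together with the unitary invariance $\|U_j^*AU_j\|_p=\|A\|_p$ to obtain
$$\|\mathbb{E}_\mathfrak{C}(A)\|_p\le\Bigl\|\sum_{j=1}^m\lambda_j U_j^*AU_j\Bigr\|_p+C_{n,p}\varepsilon\le\sum_{j=1}^m\lambda_j\|U_j^*AU_j\|_p+C_{n,p}\varepsilon=\|A\|_p+C_{n,p}\varepsilon.$$
Letting $\varepsilon\to 0$ yields $\|\mathbb{E}_\mathfrak{C}(A)\|_p\le\|A\|_p$, which is exactly the desired inequality.

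There is no real obstacle here; the only point requiring a bit of care is making sure the unitaries in the approximation lie in the \emph{commutant} of $\mathfrak{C}$ (as stated in the reference) so that the approximating convex combination still has image in $\mathfrak{C}$ is, in fact, irrelevant for this argument—what matters for the bound is only the unitary invariance of $\|\cdot\|_p$, which holds for arbitrary unitaries. An alternative route would be to invoke the general fact that trace-preserving unital completely positive maps are Schatten $p$-norm contractions (a consequence of operator/matrix convexity of $t\mapsto t^p$ for $p\ge 1$), but the approximation argument above is self-contained within the tools already introduced in the paper.
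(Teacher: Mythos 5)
Your proposal is correct and follows essentially the same route as the paper: both arguments rest on the approximation of $\mathbb{E}_\mathfrak{C}$ by convex combinations of unitary conjugations (\cite[Theorem 4.13]{C}) combined with convexity and unitary invariance of the Schatten $p$-norm, and your observation that the unitaries need not lie in the commutant for the bound itself is accurate. The only difference is cosmetic: where you invoke the triangle inequality for $\|\cdot\|_p$ together with the finite-dimensional comparison $\|X\|_p\le n^{1/p}\|X\|$ to control the $\varepsilon$-error, the paper routes the same convexity step through the duality formula $\|X\|_p=\max\{|\mathrm{Tr}\,[B^*X]| \colon \|B\|_q\le 1\}$.
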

    
  \begin{proof}
     The duality formula tells us  
     $$({\rm Tr } \: |\mathbb{E}_\mathfrak{C}(A)|^p)^{1/p} = \max_{B \in M_n(\mathbb{C})} \{ {|\rm Tr} \: [B\mathbb{E}_\mathfrak{C}(A)]| \colon \|B\|_q \leq 1 \}$$
     holds where $1/p + 1/q = 1.$
     Furthermore, for any $\varepsilon > 0,$ there exist unitary matrices $W_1, \hdots, W_m$ such that 
     $$\| \mathbb{E}_\mathfrak{C}(A) - \sum_{j=1}^m \lambda_j W_j^* A W_j \| \leq \varepsilon $$
     and $\sum_{j=1}^m \lambda_j = 1$ $(\lambda_j \geq 0).$
    Hence
   \begin{eqnarray*}
    \begin{split} 
      (\mbox{Tr} \: |\mathbb{E}_\mathfrak{C}(A)|^p)^{1/p}  &= \max_{B \in M_n(\mathbb{C})} \: \Bigl \{ \Bigl| \mbox{Tr} \: \Bigl[ \sum_{j=1}^m \lambda_j  BW_j^* A W_j \Bigr] \Bigr| \colon \|B\|_q \leq 1\Bigr\} + O(\varepsilon)\\
      &\leq    \max_{B \in M_n(\mathbb{C})}  \:\Bigl \{  \sum_{j=1}^m \lambda_j | \mbox{Tr} \:  [BW_j^* A W_j] |\colon \|B\|_q \leq 1\Bigr\} + O(\varepsilon), \\
      &\leq     \sum_{j=1}^m \lambda_j \max_{B \in M_n(\mathbb{C})}  \:\Bigl \{  | \mbox{Tr} \:  [BW_j^* A W_j] |\colon \|B\|_q \leq 1\Bigr\} + O(\varepsilon), \\
      &\hspace{-2.8cm} \mbox{and applying again the duality formula,} \\
      &= \sum_{j=1}^m \lambda_j   (\mbox{Tr} \:   |W_j^* A W_j|^p)^{1/p} + O(\varepsilon) \\
      &=   (\mbox{Tr} \:|A|^p)^{1/p} + O(\varepsilon),   
    \end{split}
   \end{eqnarray*} 
   which is what we intended to have.
  \end{proof}

 Now we can prove 
 
 \begin{thm}
   Let $e_1, \hdots, e_n$ be an orthonormal basis of $\mathbb{C}^n.$ For any $A \in M_n(\mathbb{C})$ and $1 \leq p < \infty,$ 
   \begin{eqnarray*}
    \begin{split}
    \left( {1 \over n}\sum_{i=1}^n |\langle Ae_i, e_i \rangle - {1 \over n}\sum_{j=1}^n \langle Ae_j, e_j \rangle|^p \right)^{1/p}&\leq 
       \left({1 \over n} {\rm Tr} \: \left|A -  {1 \over n} {\rm Tr } \: A\right|^p\right)^{1/p}  \\
       &\leq 2b_p^{1/p} \min_{c \in \mathbb{C}} \|A - \lambda I_n\|.  
    \end{split}   
   \end{eqnarray*} 
 \end{thm}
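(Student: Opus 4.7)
The plan is to establish the first inequality via the diagonal conditional expectation and to invoke Theorem 4 directly for the second. Let $\mathfrak{D} \subset M_n(\mathbb{C})$ denote the maximal abelian $*$-subalgebra of matrices that are diagonal with respect to the basis $e_1,\ldots,e_n$, and let $\mathbb{E}_\mathfrak{D}$ be the associated conditional expectation. A direct computation shows that $\mathbb{E}_\mathfrak{D}(A)$ is precisely the diagonal matrix whose $i$-th diagonal entry equals $\langle A e_i, e_i\rangle$.

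Next I would apply $\mathbb{E}_\mathfrak{D}$ to the centered matrix $A - \frac{1}{n}({\rm Tr}\, A)\, I$. Since $I \in \mathfrak{D}$ and $\mathbb{E}_\mathfrak{D}$ is linear, this produces the diagonal matrix with entries $\langle A e_i, e_i\rangle - \frac{1}{n}\sum_{j=1}^n \langle A e_j, e_j\rangle$. Invoking Proposition 1 with $\mathfrak{C} = \mathfrak{D}$ on this centered matrix gives
$$\sum_{i=1}^n \left|\langle A e_i, e_i\rangle - \frac{1}{n}\sum_{j=1}^n \langle A e_j, e_j\rangle\right|^p \;\leq\; {\rm Tr}\, \bigl|A - \tfrac{1}{n}({\rm Tr}\, A)\,I\bigr|^p.$$
Dividing both sides by $n$ and taking $p$-th roots yields exactly the first inequality of the theorem.

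For the second inequality, nothing beyond Theorem 4 is required, since it is literally that estimate applied to $A$. There is no substantial obstacle in this proof: the genuine content has been absorbed into Proposition 1 (which converts averaging of matrix elements into a Schatten-norm contraction) and Theorem 4 (which bounds tracial central moments via a convex combination of unitaries and Lemma 2). The present theorem simply chains these two results together through the observation that reading off the diagonal entries of $A$ in a fixed orthonormal basis is precisely the conditional expectation onto the diagonal subalgebra.
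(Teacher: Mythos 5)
Your proposal is correct and follows essentially the same route as the paper: identify the diagonal subalgebra with respect to $e_1,\hdots,e_n$, observe that the conditional expectation onto it sends the centered matrix $A-\tfrac1n({\rm Tr}\,A)I$ to the diagonal matrix of centered diagonal entries, apply Proposition 1 to get the first inequality, and invoke Theorem 4 for the second.
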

 
 \begin{proof}
   Let $\mathfrak{C}$ denote the commutative unital $*$-algebra 
   generated by the orthogonal projections $e_ie_i^*$ $(1 \leq i \leq n)$. 
   From the previous proposition one obtains that 
    \begin{eqnarray*}
        \sum_{i=1}^n |\langle Ae_i, e_i \rangle - {1 \over n}\sum_{j=1}^n \langle Ae_j, e_j \rangle|^p = {\rm Tr} \: \left|\mathbb{E}_{\mathfrak{C}}\left(A 
        -  {1 \over n} {\rm Tr } \: A\right)\right|^p  
        \leq  {\rm Tr} \: \left|A -  {1 \over n} {\rm Tr } \: A\right|^p,
    \end{eqnarray*}
     which is what we intended to have. 
 \end{proof}
 
 Lastly, the next corollary gives some information about the spread of Hermitians and normal matrices 
 in terms of the statistical dispersions of their diagonal elements.  
 
 \begin{cor}
  Let $1 \leq p < \infty.$  Let $A \in M_n(\mathbb{C})$ be a normal matrix. Then
  $$ \left({1 \over n}\sum_{i=1}^n |\langle Ae_i, e_i \rangle - {1 \over n}\sum_{j=1}^n \langle Ae_j, e_j \rangle|^p\right)^{1/p} \leq {2 \over \sqrt{3}}b_p^{1/p} \: {\rm spd }(A). $$
  Moreover, if $A$ is Hermitian then
   $$ \left({1 \over n}\sum_{i=1}^n |\langle Ae_i, e_i \rangle - {1 \over n}\sum_{j=1}^n \langle Ae_j, e_j \rangle|^p\right)^{1/p} \leq b_p^{1/p} \: {\rm spd }(A). $$
 \end{cor}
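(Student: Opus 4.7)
The plan is to derive this corollary as a direct consequence of Theorem 5 combined with the two classical facts about the quantity $\Delta(A) := \min_{\lambda \in \mathbb{C}} \|A - \lambda I\|$ in the normal and Hermitian settings. Since Theorem 5 already gives
\[
\left( {1 \over n}\sum_{i=1}^n \Bigl|\langle Ae_i, e_i \rangle - {1 \over n}\sum_{j=1}^n \langle Ae_j, e_j \rangle\Bigr|^p \right)^{1/p} \leq 2b_p^{1/p}\,\Delta(A),
\]
the whole task is to bound $\Delta(A)$ in terms of $\mathrm{spd}(A)$ under the two structural hypotheses.

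For the first (normal) inequality, I would invoke exactly inequality (2) of the paper, which records Jung's theorem applied to the eigenvalues of a normal matrix: $\Delta(A) = r_A \leq \frac{1}{\sqrt{3}} \mathrm{spd}(A)$, where $r_A$ is the radius of the smallest disk enclosing $\sigma(A)$. Plugging this into the bound from Theorem 5 produces the factor $2/\sqrt{3}$ in front of $b_p^{1/p}\,\mathrm{spd}(A)$.

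For the second (Hermitian) inequality, the key observation is that for self-adjoint $A$ the spectrum lies on the real line, so the smallest enclosing disk reduces to the smallest enclosing \emph{interval}, centered at the midpoint $\lambda_0 = \tfrac{1}{2}(\lambda_{\max} + \lambda_{\min})$. The spectral theorem then gives $\Delta(A) = \|A - \lambda_0 I\| = \tfrac{1}{2}(\lambda_{\max} - \lambda_{\min}) = \tfrac{1}{2}\mathrm{spd}(A)$, which exactly cancels the factor $2$ from Theorem 5 and yields the clean bound $b_p^{1/p}\,\mathrm{spd}(A)$.

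There is no real obstacle here: the substantive work has been done in Theorem 5 and in the geometric inequalities relating $\Delta(A)$ to the spread. The only point that deserves a brief sentence is the passage from a disk to an interval in the Hermitian case, which gives the slightly sharper constant. I would keep the whole argument to a short paragraph, essentially just citing Theorem 5 and inequality (2).
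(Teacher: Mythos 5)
Your proposal is correct and matches the paper's intended argument: the corollary is stated as an immediate consequence of Theorem 5, with the normal case handled by Jung's theorem as recorded in inequality (2), and the Hermitian case by $\min_{\lambda}\|A-\lambda I\| = \tfrac{1}{2}\,\mathrm{spd}(A)$, which cancels the factor $2$. Nothing is missing.
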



\begin{thebibliography}{99}
       \bibitem{A1}  K.M.R. Audenaert, Variance bounds, with an application to norm bounds for commutators, {\it Linear Algebra Appl.}, {\bf 432} (2010), 1126--1143.
       \bibitem{A2}  K.M.R. Audenaert, On the asymmetry of the relative entropy, {\it J. Math. Phys.} 54, 073506 (2013).
       \bibitem{BH} E. R. Barnes, A. J. Hoffman, Bounds for the spectrum of normal matrices, {\it Linear Algebra Appl.}, {\bf 201} (1994) 79--90.
       \bibitem{Bh} R. Bhatia, {\it Positive Definite Matrices}, Princeton Univ. Press, 2007. 
       \bibitem{BSh} R. Bhatia and R. Sharma, Some inequalities for positive linear maps, {\it Linear Algebra Appl.}, {\bf 436} (2012), 1562--1571.
       \bibitem{BSh2} R. Bhatia and R. Sharma, Positive linear maps and spreads of matrices, {\it Amer. Math. Monthly}, {\bf 127} (2014), 619--624.
       \bibitem{BS}  R. Bhatia and P. Semrl, Orthogonality of matrices and some distance problem, {\it Linear Algebra Appl.}, {\bf 287} (1999), 77--85.
       \bibitem{BG} T. Bhattacharyya and P. Grover, Characterization of Birkhoff--James orthogonality, {\it J. Math. Anal. Appl.}, {\bf 407} (2013), 350--358.
      \bibitem{C} E. Carlen, Trace inequalities and quantum entropy: An introductory course, Contemp. Math. 529, Amer. Math. Soc., 2010.
      \bibitem{DL} M.M. Deza and M. Laurent, {\it Geometry of Cuts and Metrics}, Algorithm and Combinatorics 15, Springer, 1997.
      \bibitem{H} Halmos, P. R. and McLaughlin, J. E., Partial isometries, {\it Pacific J. Math.}, {\bf 13} (1963), 585--596.
      \bibitem{JKW} C.R. Johnson, R. Kumar and H. Wolkowicz, Lower bounds for the spread of a matrix, {\it Linear Algebra Appl.}, {\bf 71} (1985),161--173.
     \bibitem{L} Z. Léka, A note on extremal decompositions of covariances, to appear in {\it Rocky Mountain J. Math.}.
     \bibitem{L2} Z. Léka, A note on central moments in $C^*$-algebras, {\it J. Math. Inequal.}, {\bf 9} (2015),  165--175.  
     \bibitem{CT} C-K. Li and B-S. Tam, A note on extreme correlation matrices, {\it SIAM J. Matrix Anal. Appl.}, {\bf 15 } (1994), 903--908. 
     \bibitem{MK} J.K. Merikoski and R. Kumar, Characterizations and lower bounds for the spread of a normal matrix,  {\it Linear Algebra Appl.}, {\bf 364} (2003),13--31.   
     \bibitem{M1} L. Mirsky, The spread of a matrix, {\it Mathematics}, {\bf 3} (1956), 127--130.
      \bibitem{M2} L. Mirsky, Inequalities for normal and Hermitian matrices, {\it Duke Math. J.}, {\bf 24} (1957), 591--598.
      \bibitem{Nu} J. Nocedal and S. Wright, {\it Numerical Optimization}, Springer--Verlag, 1999.
      \bibitem{PT} D. Petz and G. Tóth, Extremal properties of the variance and the quantum Fisher information, {\it Phys. Rev. A}, {\bf 87} (2013), 032324.
      \bibitem{Pe} G.K. Pedersen, {\it  Analysis Now}, Springer--Verlag, 1989.
      \bibitem{RT} H. Rademacher and O. Toeplitz, {\it The Enjoyment of Mathematics}, Dover, 1990.
     \bibitem{R2} M.A. Rieffel, Standard deviation is a strongly Leibniz seminorm, {\it New York J. Math.}, {\bf 20} (2014), 35--56.
     \bibitem{K} R. Sharma, R. Kumar, R. Saini, G. Kapoor, Complementary upper bounds for fourth central moment with extensions and applications, arXiv preprint, arXiv:1503.03786, 2015.
      \bibitem{St} J.G. Stampfli, The norm of a derivation, {\it Pacific J. Math.}, {\bf 33} (1970), 737--747. 
   \end{thebibliography}
\end{document}